\newcommand{\R}{\mathbb{R}}
\newcommand{\N}{\mathbb{N}}
\newcommand{\e}{\varepsilon}
\renewcommand{\d}{\delta}
\newcommand{\diam}{\operatorname{diam}}
\newcommand{\comment}[1]{}	%
\newcommand{\vol}{\operatorname{vol}}
\newtheorem{theorem}{Theorem}
\newtheorem{cor}[theorem]{Corollary}
\newtheorem{lemma}[theorem]{Lemma}
\newtheorem{prop}[theorem]{Proposition}
\title{An upper bound on the volume of the symmetric difference of a body and a congruent copy\footnote{This research was supported by the Deutsche Forschungsgemeinschaft within the research training group `Methods for Discrete Structures' (GRK 1408).}}
\author{Daria Schymura\footnote{schymura@mi.fu-berlin.de}}
\date{Institut f\"ur Informatik, Freie Universit\"at Berlin}
\begin{document}
\maketitle

\begin{abstract}
Let $A$ be a bounded subset of $\R^d$. We give an upper bound on the volume of the symmetric difference of $A$ and $f(A)$ where $f$ is a translation, a rotation, or the composition of both, a rigid motion. The volume is measured by the $d$-dimensional Hausdorff measure, which coincides with the Lebesgue measure for Lebesgue measurable sets.

We bound the volume of the symmetric difference of $A$ and $f(A)$ in terms of the $(d-1)$-dimensional volume of the boundary of $A$ and the maximal distance of a boundary point to its image under $f$. The boundary is measured by the $(d-1)$-dimensional Hausdorff measure, which matches the surface area for sufficiently nice sets. 
In the case of translations, our bound is sharp. In the case of rotations, we get a sharp bound under the assumption that the boundary is sufficiently nice.

%

The motivation to study these bounds comes from shape matching. For two shapes $A$ and $B$ in $\R^d$ and a class of transformations, the matching problem asks for a transformation $f$ such that $f(A)$ and $B$ match optimally. The quality of the match is measured by some similarity measure, for instance the volume of overlap.

Let $A$ and $B$ be bounded subsets of $\R^d$, and let $F$ be the function that maps a rigid motion~$r$ to the volume of overlap of $r(A)$ and $B$. Maximizing this function is a shape matching problem, and knowing that $F$ is Lipschitz continuous helps to solve it. We apply our results to bound the difference $|F(r) - F(s)|$ for rigid motions $r,s$ that are close, implying that $F$ is Lipschitz continuous for many metrics on the space of rigid motions. Depending on the metric, also a Lipschitz constant can be deduced from the bound.
\end{abstract}

\section{Introduction}

Let $K \subset \R^d$ be a compact, convex body and $t \in \R^d$ a translation vector. Let $K \oplus [0,1] t$ be the set of all points $k + \lambda t$ where $k \in K$ and $\lambda \in [0,1]$, which is the set that is 'swept over' by $K$ when translating $K$ to $K+t$. Denote the projection of $K$ to the orthogonal space of $t$ by $K|t^\bot$, and let $|t|$ be the Euclidean norm of $t$. See Figure \ref{fig:convex}.

\begin{figure}[ht]
\begin{center}
\includegraphics[width=0.35\textwidth]{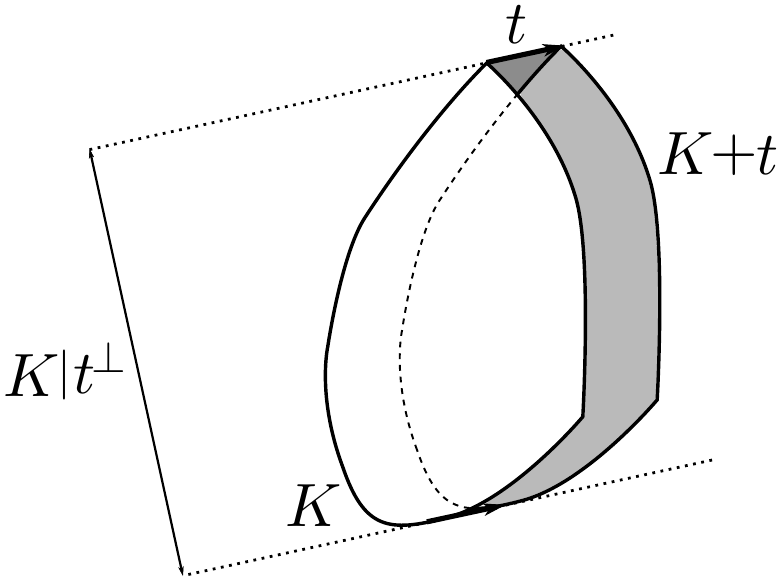}
\caption{A convex body $K$ and a translated copy $K+t$. The set $(K+t)\setminus K$ is drawn in light-gray. Both gray regions together form $(K \oplus [0,1]t) \setminus K$.}
\end{center}
\label{fig:convex}
\end{figure}
The volume of the set that is 'swept over' by $K$ when translating $K$ to $K+t$ can be computed by Cavalieri's principle as 
\begin{equation}
\operatorname{vol}_d(K \oplus [0,1]t) = \operatorname{vol}_d(K) + |t| \, \operatorname{vol}_{d-1}(K |t^{\bot}).
\label{eqn:cavalieri} 
\end{equation}
See for example \cite[Appendix A.5]{gardner}. The volume of $(K \oplus [0,1]t) \setminus K$ can be computed with the above formula; it is an upper bound on the volume of the set $(K+t) \setminus K$ since it is obviously contained in $(K \oplus [0,1]t) \setminus K$. 

The symmetric difference of two sets is defined as $A \bigtriangleup B = (A \setminus B) \cup (B \setminus A)$. If $A$ and $B$ have the same volume, we have $\operatorname{vol}(A \setminus B)= \operatorname{vol}(B \setminus A)$ and therefore $\operatorname{vol}(A \bigtriangleup B) = 2 \operatorname{vol}(A \setminus B)$. So, we also have an upper bound on the volume of $K \bigtriangleup (K+t)$.

What happens if $K$ is not convex? We do not ask for an exact formula, but for an upper bound on the volume of $K \bigtriangleup (K+t)$. Figure \ref{fig:notConvex} shows a comb-like body $E$, which indicates that such an upper bound in terms of the length of the translation vector and the $(d-1)$-dimensional volume of the projection of the body cannot exist. The more teeth the comb has, the larger is the volume of $E \bigtriangleup (E+t)$. But the $(d-1)$-dimensional volume of the projection does not change if the translation vector is orthogonal to the teeth. The example suggests that an upper bound in terms of the length of the translation vector and the $(d-1)$-dimensional volume of the boundary might exist.
\begin{figure}[ht]
\begin{center}
\includegraphics[width=0.35\textwidth]{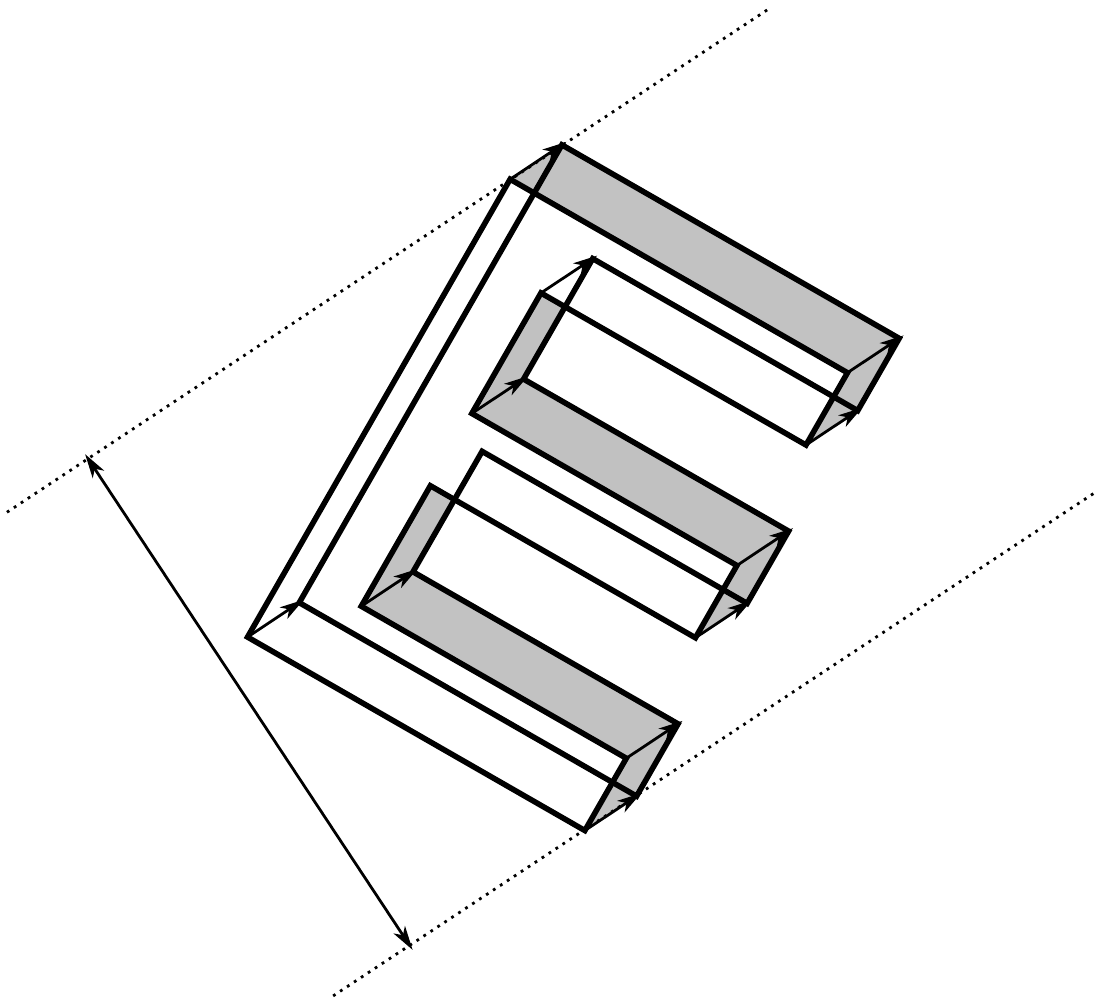}
\caption{A comb-like body $E$ and a translated copy. The set $(E \oplus [0,1]t)\setminus E)$ is shaded.}
\end{center}
\label{fig:notConvex}
\end{figure}

For a set $A \subset \R^d$, the boundary $\partial A$ is defined as the set of points that are in its closure $\operatorname{cl}(A)$, but not in its interior $\operatorname{int}(A)$. We will prove the following theorem, which shows that such an upper bound on the volume of $A \bigtriangleup (A+t)$ in terms of the $(d-1)$-dimensional volume of $\partial A$ and the length of $t$ indeed exists. Therein, $\mathcal{H}^{k}$ denotes the $k$-dimensional Hausdorff measure. For Lebesgue measurable sets, the $d$-dimensional Hausdorff measure coincides with the usual Lebesgue measure. For sets in $\R^d$ with sufficiently nice boundaries, the $(d-1)$-dimensional Hausdorff measure is the same as the intuitive surface area. The Hausdorff measure will be defined in Section \ref{sec:hausdorff}.
\begin{theorem}\label{thm:volSymDiffTrans}
Let $A \subset \R^d$ be a bounded set. Let $t \in \R^d$ be a translation vector. Then, $$\mathcal{H}^d(A \bigtriangleup (A+t)) \le |t| \, \mathcal{H}^{d-1}(\partial A).$$
\end{theorem}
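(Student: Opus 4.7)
The plan is to slice $\R^d$ by lines parallel to $t$, control the one-dimensional measure of the symmetric difference on each slice by counting boundary points that fall on that slice, and then pay for those boundary points by appealing to a standard integral-geometric projection inequality for Hausdorff measure.

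Concretely, I would write $\R^d = t^\perp \oplus \R\,t$ and, for $y \in t^\perp$, set $\ell_y = y + \R\cdot t/|t|$. Fubini will give
$$\mathcal{H}^d\bigl(A \bigtriangleup (A+t)\bigr) = \int_{t^\perp} \mathcal{H}^1\bigl((A \bigtriangleup (A+t)) \cap \ell_y\bigr)\, d\mathcal{H}^{d-1}(y).$$
The key one-dimensional observation is this: if $x \in (A \bigtriangleup (A+t)) \cap \ell_y$, then exactly one of $x, x-t$ lies in $A$, so a short continuity argument on $\lambda \mapsto \chi_A(x - t + \lambda t)$ (e.g.\ taking the supremum of those $\lambda \in [0,1]$ with $x - t + \lambda t \notin A$) produces a point of $\partial A$ on the segment $[x-t,x] \subset \ell_y$. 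Hence $(A \bigtriangleup (A+t)) \cap \ell_y$ is covered by the union, over $p \in \partial A \cap \ell_y$, of a segment of length $|t|$ based at $p$, yielding
$$\mathcal{H}^1\bigl((A \bigtriangleup (A+t)) \cap \ell_y\bigr) \;\le\; |t|\cdot \mathcal{H}^0(\partial A \cap \ell_y).$$

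To close the argument I would substitute this into Fubini and then invoke Eilenberg's inequality (the projection-multiplicity form of the coarea inequality) for the $1$-Lipschitz orthogonal projection $\pi:\R^d \to t^\perp$ along $t$, giving
$$\int_{t^\perp} \mathcal{H}^0\bigl(\partial A \cap \pi^{-1}(y)\bigr)\, d\mathcal{H}^{d-1}(y) \;\le\; \mathcal{H}^{d-1}(\partial A).$$
Chaining the three inequalities proves the theorem.

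The substantive difficulty is not geometric but measure-theoretic: for an arbitrary bounded set $A$, the boundary $\partial A$ need not be Borel, and the slice counting function $y \mapsto \mathcal{H}^0(\partial A \cap \ell_y)$ need not even be Lebesgue measurable, so both the Fubini step for $\mathcal{H}^d$ and the slicing/projection inequality require care. I expect the cleanest route is to dispose of the case $\mathcal{H}^{d-1}(\partial A)=\infty$ as trivial, and in the finite case to state the intermediate integrals as upper integrals where needed, citing Federer's geometric measure theory monograph for the precise versions of Fubini-type slicing and Eilenberg's inequality being used. Once these technicalities are handled, all three ingredients combine cleanly into the stated bound.
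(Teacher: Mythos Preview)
Your approach is correct and takes a genuinely different route from the paper. Both arguments share the geometric covering step: your Step~2 is exactly the paper's Lemma~\ref{lem:CoveringWithLineSegmentsT}, showing $A\bigtriangleup(A+t)\subseteq L:=\bigcup_{a\in\partial A}\ell(a,a+t)$. Where they diverge is in bounding $\mathcal{H}^d(L)$. The paper works directly from the definition of Hausdorff measure: cover $\partial A$ by sets $B_j$ of diameter at most $\delta$, enclose the segments over each $B_j$ in a cylinder, bound each cylinder's volume via the isodiametric inequality, sum, and let $\delta\to 0$. You instead slice by lines parallel to $t$, bound each slice by $|t|\cdot\mathcal{H}^0(\partial A\cap\ell_y)$, and integrate using Eilenberg's projection inequality. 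Your route is shorter if one is willing to quote Eilenberg's inequality; the paper's covering argument is more self-contained (needing only the isodiametric inequality) and, more to the point for this paper, its cylinder framework adapts directly to the rotation case, where the segments $\ell(a,Ma)$ are not parallel and no single projection direction can play the role of $t$.

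One correction on the measure-theoretic worries: $\partial A$ is always closed (it equals $\operatorname{cl}(A)\cap\operatorname{cl}(\R^d\setminus A)$), hence Borel, so that particular concern is unfounded. Moreover, once you have disposed of the case $\mathcal{H}^{d-1}(\partial A)=\infty$, finiteness forces $\mathcal{H}^d(\partial A)=0$, and then $A$ itself is Lebesgue measurable, being sandwiched between $\operatorname{int}(A)$ and $\operatorname{cl}(A)$ which now have equal measure. So ordinary Fubini applies to $A\bigtriangleup(A+t)$ after all; the only place an upper integral is genuinely needed is inside Eilenberg's inequality, where Federer's formulation already supplies it.
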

This inequality is best possible, in the sense that it becomes false when the right hand side is multiplied with any constant that is smaller than one. Let us assume on the contrary that the upper bound could be multiplied with $(1-\e)$ for some small, positive~$\e$. Translate a rectangle $R$ with side lengths $1$ and $\e$ in direction of the side of length~$\e$. If the length of the translation is $\e/2$, the volume of $R \bigtriangleup (R+t)$ equals $\e$, but the modified bound gives $\e - \e^3$.

We also ask how the volume of the symmetric difference behaves when we rotate the set, instead of translating it. For a rotation matrix $M \in \R^{d \times d}$, we give an upper bound on the volume of $A \bigtriangleup MA$, in terms of the $(d-1)$-dimensional volume of the boundary of $A$ and a parameter $w$ that measures the closeness of $M$ and the identity matrix with respect to $A$. The parameter $w$ is the maximal distance between $a$ and $Ma$ among all points $a \in \partial A$.

\begin{theorem}\label{thm:volSymDiffRot}
Let $A \subset \R^d$ be a bounded set. 
Let $M \in \R^{d \times d}$ be a rotation matrix and let $w = \max_{a \in \partial A} |a - Ma|$. Then, $$\mathcal{H}^d(A \bigtriangleup MA) \le \left( \frac {2d} {d+1} \right)^{\frac{d-1}{2}} \, w \, \mathcal{H}^{d-1}(\partial A).$$
\end{theorem}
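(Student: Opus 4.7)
My plan is to reduce the rotation case to the translation bound of Theorem~\ref{thm:volSymDiffTrans} by approximating $M$ locally by translations. The challenge is that a rotation displaces different points by different amounts, so one must work on a fine cover of $A$, track the approximation errors, and control the extra boundary contributions created by the cover.

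Concretely, I would fix a partition of $\R^d$ into cells $\{P_i\}$ of diameter at most $\delta$, where $\delta$ is a parameter to be chosen later. On each cell with centre $c_i$, the rotation $M$ differs from the translation $\tau_i(x)=x+(Mc_i-c_i)$ by at most $\|M-I\|\,\delta$, since $|Mx-\tau_i(x)|=|(M-I)(x-c_i)|$. Using $\mathcal{H}^d(A\triangle MA)=\sum_i\mathcal{H}^d\bigl((A\cap P_i)\triangle(MA\cap P_i)\bigr)$ together with the triangle inequality for symmetric differences, one can replace $MA\cap P_i$ by $\tau_i(A)\cap P_i$ at the cost of a tube-type error of order $\|M-I\|\,\delta\,\mathcal{H}^{d-1}(\partial(A\cap P_i))$; Theorem~\ref{thm:volSymDiffTrans} then bounds the remaining cell-wise symmetric difference by $|Mc_i-c_i|\,\mathcal{H}^{d-1}(\partial(A\cap P_i))$.

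Summing over $i$, the cell boundaries $\partial(A\cap P_i)$ split into a part inherited from $\partial A$, which reassembles as $\mathcal{H}^{d-1}(\partial A)$, and a part lying on the cell walls, which is $O(\mathcal{H}^d(A)/\delta)$ up to a dimensional constant depending on the cell shape. Replacing $|Mc_i-c_i|$ by its uniform upper bound $w$ on cells that meet $\partial A$ gives an inequality of the shape $\mathcal{H}^d(A\triangle MA)\le w\,\mathcal{H}^{d-1}(\partial A)+E_1/\delta+E_2\,\delta$, and optimising $\delta$ produces a bound of the form $C_d\,w\,\mathcal{H}^{d-1}(\partial A)$.

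The main obstacle is extracting the precise constant $\bigl(\tfrac{2d}{d+1}\bigr)^{(d-1)/2}$. The exponent $(d-1)/2$ is suggestive of a ball-to-cube volume comparison, or of an anisotropic optimisation across the directions of rotation. I expect the sharp value to emerge either from tailoring the cells to the rotation---anisotropic cells aligned with the rotation axis, or cells of a particular ellipsoidal shape---or from a direct swept-volume computation in coordinates adapted to $M$, in which the relevant Jacobian compares arc-length to chord-length along the rotation orbits. Identifying the isoperimetric ingredient that reproduces exactly this factor is likely to be the most delicate step.
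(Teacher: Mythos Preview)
Your localization strategy is genuinely different from the paper's, but as written it does not close. The structural gap is in the sentence ``optimising $\delta$ produces a bound of the form $C_d\,w\,\mathcal{H}^{d-1}(\partial A)$''. The cell-wall contribution you identify is of size $\sim w\cdot \mathcal{H}^d(A)/\delta$, and the approximation error $\|M-I\|\,\delta$ is multiplied by the \emph{same} total boundary $\sum_i \mathcal{H}^{d-1}(\partial(A\cap P_i))$, which again contains a cell-wall part of order $\mathcal{H}^d(A)/\delta$. Thus both error terms carry a factor of $\mathcal{H}^d(A)$; optimising $E_1/\delta + E_2\,\delta$ yields $2\sqrt{E_1E_2}$, which will involve $\mathcal{H}^d(A)$ and $\|M-I\|$ rather than $w\,\mathcal{H}^{d-1}(\partial A)$ alone. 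There is also a mismatch one level earlier: $(A\triangle MA)\cap P_i$ is not $(A\cap P_i)\triangle \tau_i(A\cap P_i)$, and passing between the two costs boundary terms on $\partial P_i$ that do not cancel across neighbouring cells because the translations $t_i$ differ from cell to cell. None of this is obviously irreparable, but the sketch does not indicate how to remove the dependence on $\mathcal{H}^d(A)$, and your final paragraph acknowledges that the sharp constant is still out of reach.

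The paper avoids all of this by working directly on $\partial A$ rather than partitioning $A$. First, a topological lemma shows
\[
A\bigtriangleup MA \;\subseteq\; \bigcup_{a\in\partial A}\ell(a,Ma),
\]
where $\ell(a,Ma)$ is the straight segment from $a$ to $Ma$; the nontrivial point is that the map $\varphi_\lambda(x)=(1-\lambda)x+\lambda Mx$ is a homeomorphism for $\lambda\neq\tfrac12$. Second, one covers $\partial A$ (not $A$) by sets $B_j$ of diameter at most $\delta$, encloses each $B_j$ in its smallest ball, and observes that all segments $\ell(b,Mb)$ with $b\in B_j$ lie in a cylinder of base radius $r_j$ and height at most $w+O(\delta)$. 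Summing the cylinder volumes and letting $\delta\to 0$ gives the bound. The constant $\bigl(\tfrac{2d}{d+1}\bigr)^{(d-1)/2}$ is not isoperimetric in origin: it is exactly Jung's inequality $r_j\le \sqrt{\tfrac{d}{2d+2}}\,\diam(B_j)$, which controls the ratio between the spherical and Hausdorff $(d-1)$-measures. This is the ``missing ingredient'' you were searching for; once you know to cover $\partial A$ and invoke Jung, the constant drops out with no optimisation in $\delta$.
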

We also show that the constant $\left( \frac {2d} {d+1} \right)^{\frac{d-1}{2}}$ can be replaced by $1$ for sets that have a $(\mathcal{H}^{d-1},d-\nolinebreak 1)$-rectifiable boundary. The definition of $(\mathcal{H}^{d-1},d-1)$-rectifiable is postponed to Section \ref{sec:hausdorff}. Again, $1$ is the best possible constant because a rotation is very close to a translation if the rotation center is far away from the rotated set.

A rigid motion is the composition of a rotation and a translation. Let $SO(d) \subset \R^{d\times d}$ be the special orthogonal group that is the group of rotation matrices. Parametrize the space of rigid motions as $\mathcal{R}=SO(d) \times \R^d$ where $(M,t)\in \mathcal{R}$ denotes the rigid motion $x \mapsto Mx +t$.

Since the symmetric difference fulfills the triangle inequality, we get the following corollary for rigid motions. We assume that $\partial A$ is $(\mathcal{H}^{d-1},d-1)$-rectifiable such that we can use Theorem \ref{thm:volSymDiffRot} with constant 1, as mentioned above.

\begin{cor}\label{cor:volSymDiffRM}
Let $A \subset \R^d$ be a bounded set. Let $r=(M,t) \in \mathcal{R}$ be a rigid motion, and let $w = \max_{a \in \partial A} |a - Ma|$. If $\partial A$ is $(\mathcal{H}^{d-1},d-1)$-rectifiable, then, $$\mathcal{H}^d(A \bigtriangleup r(A)) \le \, (|t| + w) \, \mathcal{H}^{d-1}(\partial A).$$
\end{cor}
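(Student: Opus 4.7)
The plan is to split the rigid motion $r = (M,t)$ into its rotational part followed by its translational part, and bound the symmetric difference contributed by each piece separately. Writing $r(A) = MA + t$ and inserting $MA$ as an intermediate set, the identity $A \bigtriangleup C = (A \bigtriangleup B) \bigtriangleup (B \bigtriangleup C) \subseteq (A \bigtriangleup B) \cup (B \bigtriangleup C)$ combined with the monotonicity and countable subadditivity of $\mathcal{H}^d$ yields
$$\mathcal{H}^d(A \bigtriangleup r(A)) \le \mathcal{H}^d(A \bigtriangleup MA) + \mathcal{H}^d(MA \bigtriangleup (MA + t)).$$

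For the first summand I would invoke Theorem \ref{thm:volSymDiffRot} in its improved form with constant~$1$, which is available precisely because $\partial A$ is assumed to be $(\mathcal{H}^{d-1},d-1)$-rectifiable; by definition of $w$ this gives $\mathcal{H}^d(A \bigtriangleup MA) \le w \cdot \mathcal{H}^{d-1}(\partial A)$. For the second summand I would apply Theorem \ref{thm:volSymDiffTrans} to the bounded set $MA$ with translation vector $t$, obtaining $\mathcal{H}^d(MA \bigtriangleup (MA + t)) \le |t| \cdot \mathcal{H}^{d-1}(\partial(MA))$. Since $M \in SO(d)$ is a Euclidean isometry, $\partial(MA) = M(\partial A)$, and $\mathcal{H}^{d-1}$ is invariant under isometries, so $\mathcal{H}^{d-1}(\partial(MA)) = \mathcal{H}^{d-1}(\partial A)$. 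Adding the two bounds gives the claimed inequality.

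There is no real obstacle here beyond a careful bookkeeping of the two ingredients. The only point worth double-checking is that the improved (constant~$1$) version of Theorem \ref{thm:volSymDiffRot} may be invoked directly — this is exactly what the rectifiability hypothesis on $\partial A$ is for — and that the translation bound may be applied to the rotated body $MA$, which has the same boundary Hausdorff measure as $A$. These are minor verifications, not substantive steps.
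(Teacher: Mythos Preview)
Your argument is correct and matches the paper's approach exactly: the paper simply remarks that the corollary follows from the triangle inequality for the symmetric difference together with Theorem~\ref{thm:volSymDiffTrans} and the constant-$1$ rotation bound (Corollary~\ref{cor:rot1}) available under the rectifiability hypothesis. Your additional observation that $\mathcal{H}^{d-1}(\partial(MA)) = \mathcal{H}^{d-1}(\partial A)$ is the one bookkeeping detail the paper leaves implicit.
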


\vspace{3mm}

Apart from the fact that studying the volume of $A \bigtriangleup f(A)$ for a translation or rotation $f$ is an interesting mathematical problem on its own, we have the following motivation from shape matching for doing so. For fixed bounded sets $A,B \subset \R^d$, let $F$ be the function that maps a rigid motion $r$ to the volume of $r(A) \cap B$. The volume of overlap measures the similarity between sets. Given shapes $A, B \subset \R^d$, computing a translation or rigid motion $r$ that maximizes $F$ means finding an optimal match w.r.t. this similarity measure. See the references in \cite{regionMatching} for more literature on maximizing the volume of overlap of two shapes $A$ and $B$; additionally, see \cite{brassOverlap} and \cite{vigneron}.

In order to develop algorithms for this problem and analyze them, it is useful to prove that $F$ is Lipschitz continuous and to be able to compute a Lipschitz constant of $F$ for given $A$ and $B$ \cite{regionMatching}. We will apply our results to bound $|F(r)-F(s)|$ by $\mathcal{H}^{d-1}(\partial A)$ times a factor that is related to $\max_{a \in \partial A} | r(a) - s(a) |$.

Equip the space of rigid motions $\mathcal{R}$ with any metric that is induced by a norm on $\R^{d \times d} \times \R^d$. Then, this bound on $|F(r) - F(s)|$ implies that $F$ is Lipschitz continuous if $\partial A$ has a finite $(d-1)$-dimensional volume, and also a Lipschitz constant can be deduced.

\begin{cor}\label{cor:shapeMatching}
Let $A,B \subset \R^d$ be bounded and $\mathcal{H}^d$-measurable sets such that $\partial A$ is $(\mathcal{H}^{d-1},d-\nolinebreak 1)$-rectifiable.
Let $r=(M,p)$ and $s=(N,q)$ be rigid motions. Then, $$|F(r)-F(s)| \le \frac 1 2 \Bigl( |p-q| + w \Bigr) \, \mathcal{H}^{d-1}(\partial A) $$ where $w=\max_{a \in \partial A} |M a - N a|$.
\end{cor}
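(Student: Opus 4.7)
The plan is to reduce $|F(r)-F(s)|$ to $\tfrac{1}{2}\mathcal{H}^d(r(A) \bigtriangleup s(A))$ and then bound the latter via a triangle inequality that splits the comparison of $r(A)$ and $s(A)$ into a pure translation step and a pure rotation step, so that Theorems \ref{thm:volSymDiffTrans} and \ref{thm:volSymDiffRot} can be applied directly. For the reduction, I observe that $r(A)$ and $s(A)$ have the same $\mathcal{H}^d$-measure as $A$. The inclusion $(X \cap B) \setminus (Y \cap B) \subseteq X \setminus Y$ (applied with $X = r(A)$, $Y = s(A)$ and with the roles swapped) together with the equality $\mathcal{H}^d(r(A) \setminus s(A)) = \mathcal{H}^d(s(A) \setminus r(A))$ gives
\[
|F(r) - F(s)| = \bigl|\mathcal{H}^d(r(A) \cap B) - \mathcal{H}^d(s(A) \cap B)\bigr| \le \tfrac{1}{2}\,\mathcal{H}^d\bigl(r(A) \bigtriangleup s(A)\bigr).
\]

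Next I would introduce the auxiliary rigid motion $s'(a) = Ma + q$, which combines the rotation of $r$ with the translation of $s$, and apply the triangle inequality for symmetric differences:
\[
\mathcal{H}^d(r(A) \bigtriangleup s(A)) \le \mathcal{H}^d(r(A) \bigtriangleup s'(A)) + \mathcal{H}^d(s'(A) \bigtriangleup s(A)).
\]
The first term compares $MA + p$ with $MA + q$, which differ by the translation $p - q$, so Theorem \ref{thm:volSymDiffTrans} (applied to $MA + p$) bounds it by $|p-q|\,\mathcal{H}^{d-1}(\partial A)$, using that rigid motions preserve the $(d-1)$-dimensional Hausdorff measure of the boundary. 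The second term compares $MA + q$ with $NA + q$; translating by $-q$ and then applying the rotation $M^T$, both isometries, reduces it to $\mathcal{H}^d(A \bigtriangleup M^T N A)$. Theorem \ref{thm:volSymDiffRot} applied with the rotation $M^T N$ produces the parameter $\max_{a \in \partial A}|a - M^T N a|$, which equals $w$ by orthogonality of $M$, and the $(\mathcal{H}^{d-1},d-1)$-rectifiability hypothesis on $\partial A$ permits using the constant~$1$ version of that theorem. This bounds the second term by $w\,\mathcal{H}^{d-1}(\partial A)$.

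Adding the two estimates and dividing by two yields the claimed inequality. The only mild subtlety is the choice of the intermediate rigid motion: working directly with $r \circ s^{-1}$ would lead to the awkward translation vector $p - M N^T q$ and an equally awkward rotation parameter, whereas $s'(a) = Ma + q$ cleanly decouples the translational and rotational mismatches between $r$ and $s$, letting each of Theorems \ref{thm:volSymDiffTrans} and \ref{thm:volSymDiffRot} do exactly one piece of work.
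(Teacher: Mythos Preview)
Your proof is correct and follows essentially the same approach as the paper: reduce to $\tfrac{1}{2}\mathcal{H}^d(r(A)\bigtriangleup s(A))$, then split via the triangle inequality into a pure translation piece and a pure rotation piece handled by Theorems~\ref{thm:volSymDiffTrans} and~\ref{thm:volSymDiffRot}. The paper first applies $s^{-1}$ to compare $A$ with $(s^{-1}\circ r)(A)$ and then invokes Corollary~\ref{cor:volSymDiffRM}; contrary to your closing remark, this route is equally clean, since $s^{-1}\circ r$ has translation vector $N^{-1}(p-q)$ of norm $|p-q|$ and rotation $N^{-1}M$ with parameter $\max_{a\in\partial A}|a-N^{-1}Ma|=\max_{a\in\partial A}|Na-Ma|=w$.
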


The volume of $(A+t) \setminus A$, which equals half of the volume of $(A +t) \bigtriangleup A$, arises also in other contexts.
For $A \subset \R^d$, the function $g_A$ that maps a translation vector $t \in \R^d$ to the volume of $(A+t) \cap A$ is called \emph{covariogram of $A$}, sometimes also \emph{set covariance}, and was introduced in \cite{matheron} for compact sets.

Since $g_A(0) - g_A(t) = \vol_d((A+t) \setminus A)$, this volume is related to estimating the directional derivatives of $g_A$ at $0$. For convex, compact sets $A$, these are determined in \cite{matheron}. For $u \in \mathcal{S}^{d-1}$, consider the function $\lambda \mapsto g_A(\lambda u)$ for $\lambda \in \R$; it has a continuous derivative that equals $- \vol_{d-1} (A|u^\bot)$. That $- \vol_{d-1} (A|u^\bot)$ is an upper bound on the derivative can be seen immediately from Equation (\ref{eqn:cavalieri}).

Galerne \cite{galerne} studies $g_A$ for measurable sets $A$ of finite Lebesgue measure. He computes the directional derivatives at the origin and proves that the perimeter of $A$ can be computed from these derivatives. The perimeter of a set is at most $\mathcal{H}^{d-1}(\partial A)$. He also computes the Lipschitz constant of $g_A$ in terms of the directional variation. For further details and definitions, we refer the reader to the paper and the references cited therein.

The inverse question whether the covariogram determines a convex body in $\R^d$, up to translations and reflections in the origin, is answered affirmative for the planar case in \cite{bianchi2D}. For three dimensions, convex polytopes are determined by their covariogram \cite{bianchi3D}. In dimension $\ge 4$, the question has a negative answer \cite{bianchi4D}. In the planar case, the class of sets among which the covariogram of a convex body is unique, is extended in \cite{bianchiMore2D}. 

The function $g_{B,A}(t):=\vol_d((A+t) \cap B)$ is called \emph{cross covariogram} for convex sets $A$ and $B$. Bianchi \cite{bianchiCross2D} proves that, for convex polygons $A$ and $B$ in the plane, $g_{B,A}$ determines the pair $(A,B)$, except for a few pairs of parallelograms. The family of exceptions is completely described.
For further references on and other occurences of the covariogram problem, see \cite{bianchi2D}.

\vspace{3mm}

In Section \ref{sec:hausdorff}, we introduce some notation, define the Hausdorff measure, the spherical measure and cite some properties of them. Sections \ref{sec:lineSegments} and \ref{sec:boundOnVolume} contain the proofs of Theorems \ref{thm:volSymDiffTrans} and \ref{thm:volSymDiffRot}. More precisely, we show that $A \bigtriangleup (A+t)$ and $A \bigtriangleup MA$ are contained in certain unions of line segments in Section \ref{sec:lineSegments}. For translations $t$, we show that $A \bigtriangleup (A+t) \subseteq \partial A \oplus [0,1]t$. For a rotation matrix $M$, we prove that $A \bigtriangleup MA$ is contained in the set of all line segments from $a$ to $Ma$ for $a \in \partial A$, see Figure \ref{fig:lineSegments}. We bound the volume of the unions of these line segments in Section \ref{sec:boundOnVolume}. In Section \ref{sec:shapeMatching}, we prove Corollary \ref{cor:shapeMatching}.

\section{Preliminaries}\label{sec:hausdorff}

We assign a volume to measurable subsets $A$ of $\R^d$ by the usual Lebesgue measure and denote it by $\mathcal{L}^d(A)$. Denote by $\omega_d$ the volume of the Euclidean unit ball in $\R^d$. We want to measure not only the volume of sets, but also the surface area, or $(d-1)$-dimensional volume, of their boundaries. We now define the Hausdorff measure, which generalizes the Lebesgue measure and which we will use for measuring boundaries. The following definitions of the Hausdorff measure, the spherical measure, their properties, and rectifiability can be found in \cite{federer}.

For $A \subset \R^d$, $0\le k \le d$ and $\d >0$, let $\mathcal{H}^k_\d(A)$ be the \emph{size $\d$ approximating $k$-dimensional Hausdorff measure} of $A$ that is defined as follows $$\mathcal{H}^k_\d(A) = \omega_d \, 2^{-d} \, \inf \left\{ \sum_{j=0}^\infty (\diam B_j)^k : \forall j\in \N \  B_j \subset \R^d, \diam B_j \le \d, A \subset \bigcup_{j=0}^\infty B_j \right\}.$$ We abbreviate $\d \to 0$ and $\d >0$ by $\d \to +0$. The $k$-dimensional Hausdorff measure of $A$ is then defined as $$\mathcal{H}^k(A) = \lim_{\d \to +0} \mathcal{H}^k_\d(A).$$ The limit exists and equals the supremum because $\mathcal{H}^k_\d(A) \le \mathcal{H}^k_\eta(A)$ for $\eta \le \d$, but it might equal $\infty$.

A set $A\subset \R^d$ is \emph{$\mathcal{H}^k$-measurable} if it satisfies the Carath\'eodory property, meaning that for all sets $B \subset \R^d$, we have $\mathcal{H}^k(A) =\mathcal{H}^k(A \cap B) + \mathcal{H}^k(A \setminus B)$. The Hausdorff measure is defined for all $A \subset \R^d$, but, of course, it is more meaningful for measurable sets.

The $0$-dimensional Hausdorff measure is the counting measure that gives for finite sets the number of elements and $\infty$ for infinite sets. The $(d-1)$-dimensional Hausdorff measure of sufficiently nice $(d-1)$-dimensional sets, for example smooth manifolds, equals the surface area. We measure the boundaries of sets in $\R^d$ by the $(d-1)$-dimensional Hausdorff measure. For Lebesgue measurable sets, the $d$-dimensional Hausdorff measure on $\R^d$ equals the Lebesgue measure. Recall that the Lebesgue measure is invariant under rotation and translation, and note that the Hausdorff measure is invariant under rotation and translation, too.

The sets $B_j$ in the definition of the approximating Hausdorff measure can be assumed to be convex because taking the convex hull does not increase the diameter of a set. Since convex sets are Lebesgue measurable \cite{convexMeasurable}, the sets $B_j$ can also be assumed to be Lebesgue measurable.

If we restrict the coverings $\{ B_j\}_{j \ge 0}$ in the definition of the approximating Hausdorff measure to be families of balls, then the resulting measure is called \emph{spherical measure}. For $A \subset \R^d$, we denote the $k$-dimensional spherical measure of $A$ by $\mathcal{S}^k(A)$. Since the choice of coverings is restricted, we have $\mathcal{H}^k(A) \le \mathcal{S}^k(A)$.

Jung's theorem, which we cite from \cite{federer}, gives a sharp bound on the radius of the smallest enclosing ball of a set of a fixed diameter. For regular, full-dimensional simplices, equality holds.

\begin{theorem}[Jung's theorem]\label{thm:Jung}
If $S \subset \R^d$ and $0 < \diam(S) < \infty$, then $S$ is contained in a unique closed ball with minimal radius, which does not exceed $\sqrt{\frac d {2d +2}} \diam(S)$.
\end{theorem}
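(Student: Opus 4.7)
The plan is to reduce the theorem to a finite statement about at most $d+1$ points via Helly's theorem. Set $r := \sqrt{d/(2d+2)}\,\diam(S)$ and, for each $x \in S$, let $B_x$ be the closed ball of radius $r$ centred at $x$. Any point $c \in \bigcap_{x \in S} B_x$ satisfies $|c - x| \le r$ for all $x \in S$, so $S$ lies in the closed ball of radius $r$ around $c$, which gives the desired enclosing ball. Since each $B_x$ is compact and convex, the infinitary version of Helly's theorem reduces the radius bound to showing that every $d+1$ of the $B_x$ have a common point. By swapping the roles of centres and candidate points, this is equivalent to proving that every $d+1$ points $x_0, \dots, x_d \in S$ lie in some common closed ball of radius at most~$r$.

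I would handle this finite statement as follows: take the smallest enclosing ball of $\{x_0, \dots, x_d\}$, with radius $\rho$ and centre $c$, and let $I \subseteq \{0, \dots, d\}$ index the points on its boundary sphere. A standard optimality (KKT) argument shows that $c \in \operatorname{conv}\{x_i : i \in I\}$, so one may write $c = \sum_{i \in I} \lambda_i x_i$ with $\lambda_i \ge 0$ and $\sum_{i \in I} \lambda_i = 1$. Expanding $0 = \bigl|\sum_{i \in I} \lambda_i (x_i - c)\bigr|^2$ and using the identity $(x_i - c) \cdot (x_j - c) = \rho^2 - \tfrac{1}{2}|x_i - x_j|^2$, which holds because $|x_i - c| = \rho$ for $i \in I$, yields
$$\rho^2 \;=\; \tfrac{1}{2} \sum_{i,j \in I} \lambda_i \lambda_j \, |x_i - x_j|^2 \;\le\; \tfrac{D^2}{2}\Bigl(1 - \sum_{i \in I} \lambda_i^2\Bigr),$$
where $D = \diam(S)$. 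Since $|I| \le d+1$, Cauchy--Schwarz gives $\sum_i \lambda_i^2 \ge 1/|I| \ge 1/(d+1)$, so $\rho^2 \le \tfrac{d}{2d+2}\,D^2$, which is precisely the Jung bound. (The extremal case is the regular $d$-simplex of edge length $D$, where all $\lambda_i = 1/(d+1)$.)

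For existence of a smallest enclosing ball, I would replace $S$ by its closure (which has the same diameter) and minimise the continuous function $c \mapsto \sup_{x \in S}|c - x|$ over a sufficiently large compact region. Uniqueness comes from a Pythagorean observation: if two distinct closed balls of the same minimal radius $r$ both contained $S$, their intersection would lie in the closed ball of strictly smaller radius $\sqrt{r^2 - h^2}$ centred at the midpoint of the two centres (where $2h > 0$ is the distance between them), contradicting minimality. The main technical obstacle is the KKT step showing that $c$ lies in the convex hull of the contact points; without it the clean symmetric computation above is unavailable, and it is exactly this property that forces the extremal configuration to be a regular simplex and thereby produces the tight constant $\sqrt{d/(2d+2)}$.
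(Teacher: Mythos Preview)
The paper does not prove Jung's theorem at all; it merely cites it from Federer as a classical result. So there is no ``paper's own proof'' to compare against.

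Your proposal is a correct and standard proof. The Helly reduction is valid (the $B_x$ are compact convex sets, so the infinitary Helly theorem applies), and the computation for at most $d+1$ points is clean and right: from $c=\sum_{i\in I}\lambda_i x_i$ one gets
\[
0=\Bigl|\sum_{i\in I}\lambda_i(x_i-c)\Bigr|^2=\rho^2-\tfrac12\sum_{i,j\in I}\lambda_i\lambda_j|x_i-x_j|^2,
\]
hence $\rho^2\le \tfrac{D^2}{2}\bigl(1-\sum_i\lambda_i^2\bigr)\le \tfrac{D^2}{2}\cdot\tfrac{d}{d+1}$, exactly the bound. The existence and uniqueness arguments are also correct. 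The only point you flag yourself is the ``KKT step'' that the centre of the smallest enclosing ball lies in the convex hull of the contact points; this is indeed the one nontrivial geometric fact, but it has an elementary proof: if a hyperplane separated $c$ from $\{x_i:i\in I\}$, moving $c$ slightly toward that hull would strictly decrease every $|c-x_i|$ with $i\in I$ while the remaining distances stay below $\rho$, contradicting minimality. With that remark included, the argument is complete.
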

From this, it follows that we have $\mathcal{S}^k(A) \le \bigl( \frac {2d} {2d +2}\bigr)^{k/2} \mathcal{H}^k(A)$ for all $A\subset \R^d$. In general, the Hausdorff measure and the spherical measure are not equal, but for $(\mathcal{H}^k,k)$-rectifiable subsets of $\R^d$ they agree \cite[Theorem 3.2.26]{federer}. We define the notion of rectifiability now.

A subset $E$ of a metric space $X$ is \emph{$k$-rectifiable} if there exists a Lipschitz continuous function that maps some bounded subset of $\R^k$ onto $E$. The union of countably many $k$-rectifiable sets is called \emph{countably $k$-rectifiable}. $E$ is called \emph{countably $(\mu,k)$-rectifiable} if $\mu$ is a measure defined on $E$ and there is a countably $k$-rectifiable set that contains $\mu$-almost all of $E$. If, additionally, $\mu(E) < \infty$, then $E$ is called \emph{$(\mu,k)$-rectifiable}.

Lastly, we cite the isodiametric inequality from \cite{federer}. It says that, among the Lebesgue measurable sets of a fixed diameter, Euclidean balls have the largest volume.

\begin{theorem}[Isodiametric Inequality] \label{thm:isodiametricIneq}
If $\emptyset \neq S \subset \R^d$ is Lebesgue measurable, then $\mathcal{L}^d (S) \le \omega_d 2^{-d} (\diam(S))^d$.
\end{theorem}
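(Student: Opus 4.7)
The plan is to derive the bound from the Brunn--Minkowski inequality. The key observation is that for any $x, y \in S$ one has $|\tfrac{1}{2}(x-y)| \le \diam(S)/2$, so the difference set $\tfrac{1}{2}(S - S) := \{\tfrac{1}{2}(x - y) : x, y \in S\}$ lies in the closed Euclidean ball of radius $\diam(S)/2$ centered at the origin, and hence has $d$-dimensional Lebesgue measure at most $\omega_d 2^{-d}(\diam(S))^d$.

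Next I would apply the Brunn--Minkowski inequality to the sets $\tfrac{1}{2}S$ and $\tfrac{1}{2}(-S)$, obtaining
\[
\mathcal{L}^d\Bigl(\tfrac{1}{2}S + \tfrac{1}{2}(-S)\Bigr)^{1/d} \ge \mathcal{L}^d\bigl(\tfrac{1}{2}S\bigr)^{1/d} + \mathcal{L}^d\bigl(\tfrac{1}{2}(-S)\bigr)^{1/d} = \mathcal{L}^d(S)^{1/d},
\]
using reflection and scaling invariance of Lebesgue measure. Identifying $\tfrac{1}{2}S + \tfrac{1}{2}(-S)$ with $\tfrac{1}{2}(S-S)$ and chaining the two bounds gives $\mathcal{L}^d(S) \le \omega_d 2^{-d}(\diam(S))^d$, which is the claim.

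The main technical obstacle is that $\tfrac{1}{2}(S-S)$ need not be Lebesgue measurable even when $S$ is, so Brunn--Minkowski as usually stated for measurable sets does not apply directly. To circumvent this, I would approximate $S$ from inside by compact sets $K_n \subseteq S$ with $\mathcal{L}^d(K_n) \uparrow \mathcal{L}^d(S)$ and $\diam(K_n) \le \diam(S)$; the set $\tfrac{1}{2}(K_n - K_n)$ is then compact, Brunn--Minkowski applies directly, and the resulting bound $\mathcal{L}^d(K_n) \le \omega_d 2^{-d}(\diam(S))^d$ passes to the limit. A completely self-contained alternative avoiding Brunn--Minkowski would use iterated Steiner symmetrizations, which preserve $\mathcal{L}^d$ and do not increase the diameter, and converge in a suitable sense (for a dense family of hyperplane directions) to a Euclidean ball of the same volume whose radius is at most $\diam(S)/2$, from which the inequality is immediate.
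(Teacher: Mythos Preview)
The paper does not actually prove this statement: it merely cites the isodiametric inequality from Federer \cite{federer} and uses it as a black box in Section~\ref{sec:boundOnVolume}. So there is no ``paper's own proof'' to compare against.

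Your argument is the standard Brunn--Minkowski proof and is correct. The observation that $\tfrac12(S-S)$ sits in the ball of radius $\diam(S)/2$, combined with $\mathcal{L}^d(\tfrac12 S + \tfrac12(-S))^{1/d}\ge \mathcal{L}^d(S)^{1/d}$, gives exactly the claimed bound. You were right to flag the measurability of $S-S$ and to repair it by inner regularity: approximating $S$ by compact $K_n$ makes $\tfrac12(K_n-K_n)$ compact, so Brunn--Minkowski applies cleanly and the limit recovers the inequality for $S$. The Steiner-symmetrization alternative you mention is also a valid (and historically the original) route; either would be acceptable as a self-contained proof were one required here.
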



\section{Covering the symmetric difference of a body and a copy by line segments}\label{sec:lineSegments}

For $x,y \in \R^d$, the line segment from $x$ to $y$ is the set $\{ (1-\lambda) x + \lambda y : \lambda \in [0,1] \}$, and is denoted by $\ell(x,y)$. The Minkowski sum of two sets $A \oplus B$ equals the set of all sums $a+b$ for $a \in A$ and $b\in B$.

We show that for any bounded set $A \subset \R^d$ and a translation $t$, the set $A \bigtriangleup (A+t)$ is covered by the union of the line segments $\ell(a,a+t)$ where $a$ is in the boundary of $A$. For a rotation matrix $M$, the set $A\bigtriangleup MA$ is covered by the union of the  line segments $\ell(a,Ma)$ where $a$ again is in the boundary of $A$. See Figure \ref{fig:lineSegments}.

\begin{figure}[ht]
\begin{center}
\includegraphics[width=0.7\textwidth]{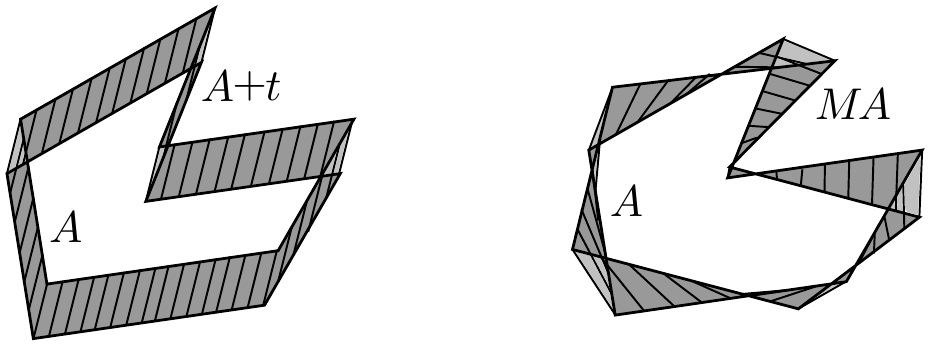}
\caption{On the left, the figure shows a body $A$ and a translated copy. On the right, the figure shows a body $A$ and a rotated copy. The symmetric differences are drawn in dark-gray. Examples of the line segments are drawn and the union of the line segments is drawn in light-gray.}
\label{fig:lineSegments}
\end{center}
\end{figure}
\begin{lemma}\label{lem:CoveringWithLineSegmentsT}
Let $A \subset \R^d$ be a bounded set, and let $t \in \R^{d}$ be a translation vector. Then, $$A \bigtriangleup (A+t) \subseteq \bigcup \{ \ell(a,a+t) : a \in \partial A \}.$$
\end{lemma}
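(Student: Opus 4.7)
The plan is to take an arbitrary point $x \in A \bigtriangleup (A+t)$ and exhibit a boundary point $a \in \partial A$ such that $x \in \ell(a, a+t)$. Split into the two symmetric cases $x \in A \setminus (A+t)$ and $x \in (A+t) \setminus A$; I will write out only the first, since the second is handled by an identical argument with $t$ replaced by $-t$ in the parameterization.

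So assume $x \in A \setminus (A+t)$, equivalently $x \in A$ but $x - t \notin A$. Parameterize the segment $\ell(x-t, x)$ by $\gamma \colon [0,1] \to \R^d$, $\gamma(\lambda) = x + (\lambda - 1)t$, so that $\gamma(0) = x - t \notin A$ and $\gamma(1) = x \in A$. The key step is to locate a boundary crossing: set
$$
\lambda^* \ = \ \inf \bigl\{ \lambda \in [0,1] : \gamma(\lambda) \in \operatorname{cl}(A) \bigr\}.
$$
This set is nonempty (it contains $\lambda = 1$), so $\lambda^*$ is well defined. Since $\operatorname{cl}(A)$ is closed and $\gamma$ is continuous, the preimage $\gamma^{-1}(\operatorname{cl}(A))$ is closed, so the infimum is attained and $\gamma(\lambda^*) \in \operatorname{cl}(A)$.

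I then claim $a := \gamma(\lambda^*) \in \partial A$. Since $a \in \operatorname{cl}(A)$, it suffices to rule out $a \in \operatorname{int}(A)$. If $a$ were interior, a whole neighborhood of $a$ would lie in $A \subseteq \operatorname{cl}(A)$; by continuity of $\gamma$ this would force $\gamma(\lambda) \in \operatorname{cl}(A)$ for all $\lambda$ in some open interval around $\lambda^*$, contradicting the infimum (when $\lambda^* > 0$), and contradicting $\gamma(0) = x - t \notin A$ (when $\lambda^* = 0$). Hence $a \in \partial A$.

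Finally I verify that $x$ lies on the required segment: since $a = x + (\lambda^* - 1)t$ and $a + t = x + \lambda^* t$, the segment $\ell(a, a+t) = \{a + \mu t : \mu \in [0,1]\}$ contains $x$ by choosing $\mu = 1 - \lambda^* \in [0,1]$. For the second case, the analogous parameterization $\delta(\lambda) = x - \lambda t$ with $\delta(0) = x \notin A$, $\delta(1) = x - t \in A$ and the same infimum argument produces $a = x - \lambda^* t \in \partial A$ with $x = a + \lambda^* t \in \ell(a, a+t)$. I expect the only mildly delicate point to be handling the corner cases $\lambda^* \in \{0,1\}$ cleanly, which is exactly what the $\operatorname{cl}(A)$-versus-$\operatorname{int}(A)$ dichotomy above is designed to address; everything else is a direct translation of the intuition that any path from outside $A$ to inside $A$ must meet $\partial A$.
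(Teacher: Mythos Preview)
Your proof is correct and follows essentially the same idea as the paper's: given $x\in A\setminus(A+t)$, locate a boundary point of $A$ on the segment through $x$ in direction $t$ and conclude that $x$ lies on the corresponding $\ell(a,a+t)$. The only cosmetic difference is that the paper phrases the crossing argument via $\partial(A+t)$ and then shifts back by $-t$, whereas you work directly with $\operatorname{cl}(A)$ and an explicit infimum; your version is in fact a bit more careful about the corner cases.
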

\begin{proof}
We show that, for any translation vector $t \in \R^d$, we have $A \setminus (A+t) \subseteq \bigcup \{ \ell(a,a+t) : a \in \partial A \}$, which implies the claim. Let $a \in A \setminus (A+t)$ and let $l$ be the line $\{ a+\lambda t : \lambda \in \R \}$. If $a \in \partial A$, we are done. Otherwise $a \in \operatorname{int}(A)$ and therefore $a+t \in \operatorname{int}(A+t)$. Since $l$ intersects $\operatorname{int}(A+t)$ and $A+t$ is bounded, we have $\partial (A+t) \cap l \neq \emptyset$. Let $\lambda \in (0,1]$ such that $a+\lambda t \in \partial (A+t)$. Then $a'=a +(\lambda - 1)t$ is a point in $\partial A$ such that $a'+(1-\lambda)t=a$.
\end{proof}
\begin{lemma}\label{lem:CoveringWithLineSegmentsR}
Let $A \subset \R^d$ be a bounded set, and let $M \in \R^{d \times d}$ be a rotation matrix. Then, $$A \bigtriangleup MA \subseteq \bigcup \{ \ell(a,Ma) : a \in \partial A \}.$$
\end{lemma}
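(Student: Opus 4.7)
The plan is to mimic the proof of Lemma~\ref{lem:CoveringWithLineSegmentsT}. First, it suffices to establish $A \setminus MA \subseteq \bigcup \{\ell(b, Mb) : b \in \partial A\}$ for every bounded $A$ and every rotation $M$: applying that inclusion to the bounded set $MA$ and the rotation $M^{-1}$ (and using $\partial(MA) = M\partial A$ together with $\ell(x,y) = \ell(y,x)$) gives $MA \setminus A \subseteq \bigcup\{\ell(c, Mc) : c \in \partial A\}$. Fix $a \in A \setminus MA$; if $a \in \partial A$, take $b = a$. Otherwise $a \in \operatorname{int}(A)$, and $a \notin MA$ forces $M^{-1}a \notin A$.

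The role of the direction-$t$ line from Lemma~\ref{lem:CoveringWithLineSegmentsT} will be played by the preimage curve $b(\lambda) := \phi_\lambda^{-1}(a)$ of the affine interpolation $\phi_\lambda := (1-\lambda)I + \lambda M$, because $a \in \ell(b, Mb)$ is equivalent to $a = \phi_\lambda(b)$ for some $\lambda \in [0,1]$. When $\phi_\lambda$ is invertible throughout $[0,1]$, $b(\cdot)$ is continuous with $b(0) = a \in A$ and $b(1) = M^{-1}a \notin A$, so a standard topological crossing argument (if $b([0,1])$ were disjoint from $\partial A$, the disjoint open preimages $b^{-1}(\operatorname{int}(A))$ and $b^{-1}(\operatorname{int}(\R^d \setminus A))$ would form a separation of the connected interval $[0,1]$) yields $\lambda^* \in [0,1]$ with $b(\lambda^*) \in \partial A$, and then $a = \phi_{\lambda^*}(b(\lambda^*)) \in \ell(b(\lambda^*), Mb(\lambda^*))$.

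The main obstacle is that $\phi_\lambda$ can be non-invertible. Its eigenvalues are $(1-\lambda)+\lambda\mu$ as $\mu$ runs over the unit-modulus eigenvalues of $M$, so a brief inspection shows the only singular $\lambda \in [0,1]$ is $\lambda = 1/2$, and only when $M$ has $-1$ as an eigenvalue. To handle this I would split $\R^d = V \oplus V^\perp$ with $V := \ker(M+I)$: on $V$, $\phi_\lambda$ acts as the scalar $1-2\lambda$, while on $V^\perp$ it stays invertible on $[0,1]$. If the $V$-component $a_V$ of $a$ is nonzero, the $V$-component of $b(\lambda)$ is $a_V/(1-2\lambda)$, hence $|b(\lambda)| \to \infty$ as $\lambda \to 1/2^-$; boundedness of $A$ then forces $b(\lambda_1) \notin A$ for some $\lambda_1 \in (0, 1/2)$, and the crossing argument run on $[0,\lambda_1]$ still delivers the required boundary point. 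If $a_V = 0$, then $\phi_{1/2}^{-1}(a)$ is the full coset $(\phi_{1/2}|_{V^\perp})^{-1}(a) + V$, and choosing the $V^\perp$-representative makes $b$ continuous across $\lambda = 1/2$, reducing to the invertible case. As a uniform alternative, one may instead run the crossing argument in the one-point compactification $\R^d \cup \{\infty\}$, where $b$ extends continuously through the singular parameter and the bounded set $A$ stays away from $\infty$.
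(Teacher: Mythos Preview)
Your argument is correct. Both you and the paper build on the same affine interpolation $\phi_\lambda = (1-\lambda)I + \lambda M$ (the paper writes $\varphi_\lambda$) and the observation that $\lambda = 1/2$ is the only possibly singular parameter on $[0,1]$, so the overall strategy coincides; the execution, however, is genuinely different in two respects. First, the paper tracks the \emph{forward images} $\varphi_\lambda(\operatorname{cl}(A))$ sweeping over a fixed $y \in MA \setminus A$: it sets $t = \inf\{\lambda : y \in \varphi_\lambda(\operatorname{cl}(A))\}$, uses compactness to show $y \in \varphi_t(\operatorname{cl}(A))$, argues that $y \in \partial\varphi_t(\operatorname{cl}(A))$, and then invokes the homeomorphism $\varphi_t$ to conclude $\partial\varphi_t(\operatorname{cl}(A)) = \varphi_t(\partial A)$. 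You instead trace the \emph{preimage curve} $b(\lambda) = \phi_\lambda^{-1}(a)$ from $b(0)=a \in \operatorname{int}(A)$ to $b(1)=M^{-1}a \notin A$ and apply a direct connectedness argument on $[0,1]$. Second, at the singularity the paper notes that when $\varphi_{1/2}$ is not injective on $\operatorname{cl}(A)$ the fibre $\varphi_{1/2}^{-1}(y)$ contains an affine line, which by boundedness must meet $\partial A$; you use the eigenspace splitting $V \oplus V^\perp$ with $V = \ker(M+I)$, handling the blow-up case $a_V \neq 0$ (the curve escapes $\operatorname{cl}(A)$ before $\lambda = 1/2$, so one runs the crossing on $[0,\lambda_1]$) and the continuous-extension case $a_V = 0$ separately. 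Your route is somewhat more elementary and sidesteps the identification $\partial\varphi_t(\operatorname{cl}(A)) = \varphi_t(\partial A)$, which in the paper's version tacitly needs $\varphi_t$ to be a homeomorphism of the ambient $\R^d$ rather than merely of $\operatorname{cl}(A)$; the paper's version, on the other hand, does not require the explicit spectral decomposition of $M$.
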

\begin{proof}
Consider the continuous function $\varphi: [0,1] \times \R^d \to \R^d$ that is defined by $\varphi(\lambda,x) = \varphi_\lambda(x) = (1-\lambda) x + \lambda Mx$. We want to show that $MA \setminus A \subseteq \varphi([0,1] \times \partial A)$, which by symmetry implies the claim. We first prove that $\varphi_\lambda$ is injective for all $\lambda \in [0,1] \setminus \{ \frac{1}{2} \}$. This implies that for each $\lambda \in [0,1] \setminus \{ \frac{1}{2} \}$ and each bounded set $S \subset \R^d$, the function $\varphi_\lambda:\operatorname{cl}(S) \to \varphi_\lambda(\operatorname{cl}(S))$ is a homeomorphism because it is bijective and linear.

Assume that there exist $x,y \in \R^d$, $x \neq y,$ such that $\varphi_\lambda(x)= \varphi_\lambda(y)$. Then $M(x-y)= (\lambda-1)/\lambda (x-y)$, so $(\lambda-1)/\lambda$ is an eigenvalue of the rotation $M$. Since for a rotation only $1$ or $-1$ can occur as eigenvalues, we get $\lambda=1/2$.

Let $y \in MA \setminus A$. We now distinguish two cases.
\begin{itemize}
\item \textbf{Case 1. }$\varphi_{1/2}:\operatorname{cl}(A) \to \varphi_{1/2}(\operatorname{cl}(A))$ is not bijective and $y \in \varphi_{1/2}(\operatorname{cl}(A))$\vspace{2mm}\\ 
Let $a,b \in \operatorname{cl}(A)$ such that $a \neq b$ and $\varphi_\lambda(a)= \varphi_\lambda(b)$, and let $x \in \varphi_{1/2}^{-1}(y)$. For each point $v$ on the line $L=\{ x + \lambda(b-a) : \lambda \in  \R \}$, we have $\varphi_{1/2}(v)= y$, due to the linearity of $\varphi_\lambda$. Since $A$ is bounded, $\partial A \cap L \neq \emptyset$ and for every point $x'$ in this set $y=\varphi(1/2,x') \in \varphi([0,1] \times \partial A)$. 
\item \textbf{Case 2. }$\varphi_{1/2}:\operatorname{cl}(A) \to \varphi_{1/2}(\operatorname{cl}(A))$ is bijective or $y \notin \varphi_{1/2}(\operatorname{cl}(A))$\vspace{2mm}\\ 
Since $y \in \varphi_1(A)$, we can define $t=\inf \{ \lambda \in [0,1] : y \in \varphi_\lambda(\operatorname{cl}(A))\}$. We now show that $y \in \varphi_t(\operatorname{cl}(A))$. Assume that $y \notin \varphi_t(\operatorname{cl}(A))$. Then for all $a \in \operatorname{cl}(A)$ the distance $|\varphi_t(a)-y|>0$. Since $\operatorname{cl}(A)$ is compact and the distance is continuous, we would have $\min \{ |\varphi_t(a)-y| : a \in \operatorname{cl}(A) \} = \eta >0$. Let $w = \max \{ |a-Ma| : a \in \operatorname{cl}(A)\} < \infty$. For all $a \in \operatorname{cl}(A)$, we have $| \varphi_\lambda(a) - \varphi_{\lambda+\nu}(a)| \le \nu w$, so we would have $|y-\varphi_{t+ \frac \eta {2w}}(a)| \ge \left| | y-\varphi_t(a)| - |\varphi_t(a) - \varphi_{t+ \frac \eta {2w}}(a) | \right| \ge \frac \eta 2$.
Therefore, we would have $t < t + \frac \eta {2w} \le \inf \{ \lambda \in [0,1] : y \in \varphi_\lambda(\operatorname{cl}(A))\}$, which is a contradiction to the definition of $t$. By the case distinction, $\varphi_t:\operatorname{cl}(A) \to \varphi_{t}(\operatorname{cl}(A))$ is bijective.

Next, we show that $y \in \partial \varphi_t(\operatorname{cl}(A))$. If $y \in \partial A$, we are done. Assume otherwise that $y \notin \partial A$. Since $y \notin A$, we have $t>0$. Assume that $y \in \operatorname{int}(\varphi_t(\operatorname{cl}(A)))$. Then there exists $\e >0$ such that $B(y,\e) \subseteq \varphi_t(\operatorname{cl}(A))$. Let $0<\d \le \frac \e {3w}$ such that $0<t-\d \neq \frac 1 2$. Let $U=\varphi_t^{-1}(B(y,\e))$. The function $f=\varphi_{t-\d}\circ \varphi_t^{-1}:B(y,\e) \to \varphi_{t-\d}(U)$ is a homeomorphism. Since homeomorphisms preserve topology and, for all $v \in \varphi_t(\operatorname{cl}(A))$, we have $|v - f(v)| \le \e/3$, we also have $y \in \varphi_{t-\d}(\operatorname{cl}(A))$, which is a contradiction to the definition of $t$.

Since $\varphi_t$ is a homeomorphism, we have that $\partial \varphi_t(\operatorname{cl}(A)) = \varphi_t(\partial A)$, which ends the proof.
\end{itemize}
\end{proof}

\section{Bounding the volume of certain unions of line segments}\label{sec:boundOnVolume}

Recall that, for $x,y \in \R^d$, the line segment from $x$ to $y$ is denoted by $\ell(x,y)$.

Next, we will prove that the volume of the union of line segments from $a$ to $a+t$ for $a \in \partial A$ is bounded by the length of $t$ times the $(d-1)$-dimensional volume of $\partial A$. Together with the results of the last section, this gives a bound on the volume of $A \bigtriangleup (A+t)$, proving Theorem \ref{thm:volSymDiffTrans}.

\begin{lemma}\label{thm:volLineSegsTrans}
Let $A \subset \R^d$ be a bounded set. Let $t \in \R^d$ be a translation vector. Then, $$\mathcal{H}^d \left( \bigcup \{ \ell (a,a+t) : a \in \partial A \} \right) \le |t| \; \mathcal{H}^{d-1}(\partial A).$$
\end{lemma}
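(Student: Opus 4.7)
The plan is to identify $U := \bigcup\{\ell(a,a+t) : a \in \partial A\}$ with the Minkowski sum $\partial A \oplus [0,1]t$, cover $\partial A$ by many small convex pieces, and apply Cavalieri's principle (Equation~(\ref{eqn:cavalieri})) to the resulting ``sausages'' $B_j \oplus [0,1]t$, then let the mesh size tend to zero. We may assume $\mathcal{H}^{d-1}(\partial A) < \infty$, else the inequality is vacuous.

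Fix $\d > 0$. By the remark in Section~\ref{sec:hausdorff} that covering sets in the definition of the approximating Hausdorff measure may be taken convex and Lebesgue measurable, we choose a cover $\{B_j\}_{j \in \N}$ of $\partial A$ by compact convex sets with $\diam B_j \le \d$ whose sum $\sum_j (\diam B_j)^{d-1}$ is within $\d$ of the infimum defining $\mathcal{H}^{d-1}_\d(\partial A)$. Since $U \subseteq \bigcup_j (B_j \oplus [0,1]t)$ and each $B_j \oplus [0,1]t$ is convex and hence Lebesgue measurable, subadditivity of the outer measure $\mathcal{H}^d$ gives $\mathcal{H}^d(U) \le \sum_j \vol(B_j \oplus [0,1]t)$. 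Formula~(\ref{eqn:cavalieri}) rewrites each summand as $\vol(B_j) + |t| \, \vol_{d-1}(B_j | t^\perp)$. The isodiametric inequality (Theorem~\ref{thm:isodiametricIneq}) bounds $\vol(B_j) \le \omega_d 2^{-d} (\diam B_j)^d \le \d \cdot \omega_d 2^{-d} (\diam B_j)^{d-1}$, and the same inequality applied inside the hyperplane $t^\perp$, combined with the fact that orthogonal projection does not increase diameter, bounds $\vol_{d-1}(B_j | t^\perp) \le \omega_{d-1} 2^{-(d-1)} (\diam B_j)^{d-1}$.

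Summing over $j$ and invoking near-optimality of the cover yields
$$\mathcal{H}^d(U) \le \bigl(\d \cdot \omega_d 2^{-d} + |t| \cdot \omega_{d-1} 2^{-(d-1)}\bigr) \sum_j (\diam B_j)^{d-1},$$
where $\sum_j (\diam B_j)^{d-1}$ is comparable to $\mathcal{H}^{d-1}_\d(\partial A)$ up to the normalising constant in the definition of that measure. Letting $\d \to 0$, the first summand in the bracket vanishes since $\mathcal{H}^{d-1}(\partial A)$ is finite, and the second contribution tends to $|t| \, \mathcal{H}^{d-1}(\partial A)$, which is the desired bound. The main obstacle I anticipate is precisely this last point: verifying that the ``transverse thickness'' contribution coming from $\vol(B_j)$ really vanishes in the limit rather than producing a stray additive constant. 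The saving feature is the extra factor of $\d$ extracted from $\vol(B_j) \le \d \cdot \omega_d 2^{-d} (\diam B_j)^{d-1}$, but this argument only closes under the a priori assumption that $\mathcal{H}^{d-1}(\partial A) < \infty$.
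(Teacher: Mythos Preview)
Your argument is correct and is essentially the paper's own proof: cover $\partial A$ by small convex sets $B_j$ of diameter at most $\d$, bound $\mathcal{H}^d(U)$ by the total volume of the tubes over the $B_j$, estimate each tube's cross-section via the isodiametric inequality applied to $B_j|t^\perp$, and let $\d \to 0$. The only cosmetic difference is that the paper encloses each sausage $B_j \oplus [0,1]t$ in a right cylinder over $B_j|t^\perp$ (incurring a multiplicative $(1+\diam B_j)$ in the height), whereas you compute the sausage volume directly via Equation~(\ref{eqn:cavalieri}) and absorb the extra $\vol_d(B_j)$ as an additive $O(\d)$ term; both error terms vanish in the limit.
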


\begin{proof}
 We abbreviate $L = \bigcup \{ \ell (a,a+t) : a \in \partial A\}$.
Let $\d>0$ and let $\{ B_j : j \in \N \}$ be a covering of $\partial A$ with $\diam{B_j} \le \d$ for all $j \in \N$.

For each $j \in \N$, we define a cylinder $Z_j$ such that $L \subseteq \bigcup \{ Z_j : j \in \N \}$. The top and bottom of the cylinder are formed by copies of $B_j$ projected to the orthogonal space of $t$. See Figure \ref{fig:transZylinder}. The bottom of the cylinder $Z_j^b$ sits in the hyperplane that contains a point of $\operatorname{cl}(B_j)$, but does not contain any point of $\operatorname{cl}(B_j)$, when translated in direction $-t$ by any small amount. The top of the cylinder $Z_j^t$ is formed by $Z_j^b + (1+\diam(B_j)) t$. By construction, the cylinder $Z_j$ contains $\bigcup \{ \ell(b,b+t) : b \in B_j \}$.

\begin{figure}[ht]
\begin{center}
\includegraphics[page=1,width=0.6\textwidth]{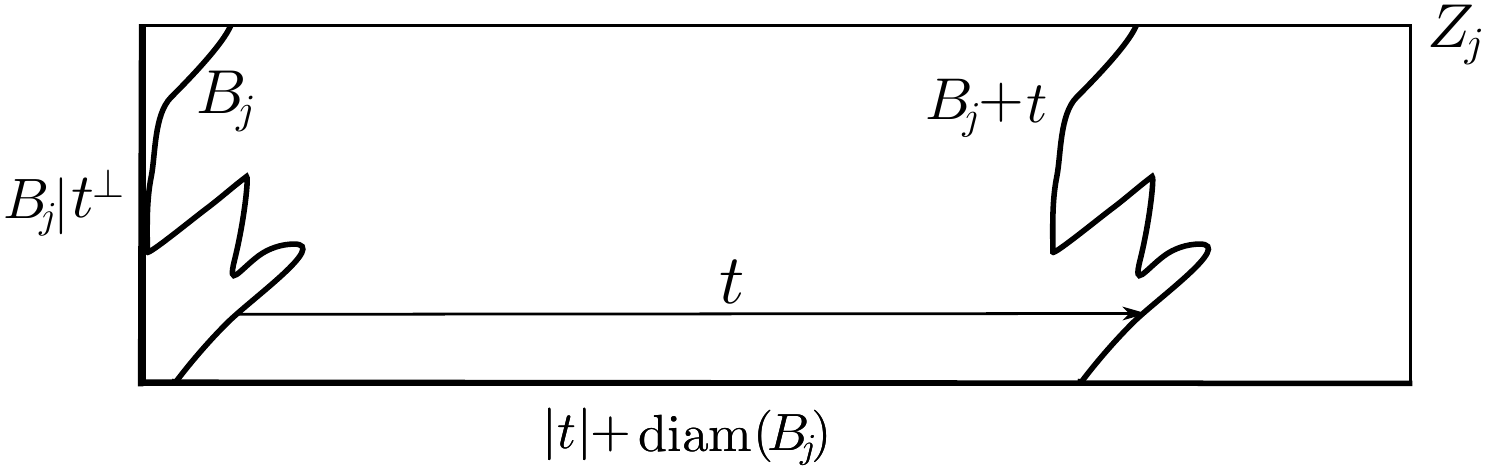}
\caption{The definition of the cylinder $Z_j$, which contains all line segments $\ell(b,b+t)$ for $b \in B_j$.}
\label{fig:transZylinder}
\end{center}
\end{figure}

The volume $\mathcal{H}^d (Z_j) = \mathcal{H}^{d-1} (B_j | t^\bot) \, (1+\diam(B_j)) \, |t|$ (Theorem 3.2.23 in \cite{federer}). Note that $\diam(B_j | t^\bot) \le \diam(B_j)$. By Theorem \ref{thm:isodiametricIneq}, $\mathcal{H}^{d-1}(B_j | t^\bot) \le \omega_{d-1} (\diam(B_j)/2)^{d-1}$. We have 
$$\mathcal{H}^d(L) \, \le \,  \sum_{j \in \N} \mathcal{H}^d (Z_j)
 \, \le \, (1 + \d) |t| \, \sum_{j \in \N} \omega_{d-1} (\diam(B_j)/2)^{d-1}$$
This implies that $\mathcal{H}^d(L) \le (1 + \d) \, |t| \, \mathcal{H}_\d^{d-1} (\partial A)$ for all $\d>0$. Therefore, $\mathcal{H}^d(L) \le |t| \, \mathcal{H}^{d-1} (\partial A)$.
\end{proof}

Next, we will bound the volume of the union of line segments from $a$ to $Ma$ for $a \in \partial A$ for a rotation matrix $M$. Together with the results of the previous section, this gives a bound on the volume of $A \bigtriangleup MA$, proving Theorem \ref{thm:volSymDiffRot}.

\begin{lemma}\label{thm:volLineSegsRot}
Let $A \subset \R^d$ be a bounded set.
Let $M \in \R^{d \times d}$ be a rotation matrix and let $w = \max_{a \in \partial A} |a - Ma|$.
Then, $$\mathcal{H}^d \left( \bigcup \{ \ell(a,Ma) : a \in \partial A \} \right) \le \left( \frac{2d}{d+1}\right)^{\frac {d-1} 2} \, w \, \mathcal{H}^{d-1}(\partial A).$$
\end{lemma}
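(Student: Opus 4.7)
The strategy is to imitate Lemma~\ref{thm:volLineSegsTrans}: cover $\partial A$ by small pieces, enclose each bundle of line segments coming from a piece in a geometrically simple superset of known volume, sum up, and refine. Relative to the translation case, the new difficulty is that the line segments $\ell(a,Ma)$ emanating from a single small piece of $\partial A$ are no longer parallel, so a straight cylinder cannot serve as the enclosing shape.

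The geometric replacement I would use is a ``sausage.'' First I would prove that for any $c\in\R^d$ and $r\ge 0$,
\[
\bigcup\{\ell(b,Mb):b\in B(c,r)\}\;\subseteq\;\ell(c,Mc)\oplus B(0,r).
\]
Indeed, writing $b=c+v$ with $|v|\le r$, a point of $\ell(b,Mb)$ has the form $\bigl((1-\lambda)c+\lambda Mc\bigr)+\bigl((1-\lambda)v+\lambda Mv\bigr)$, and the norm of the second summand is at most $(1-\lambda)|v|+\lambda|Mv|=|v|\le r$ since $M$ is an isometry. The Minkowski sum $\ell(c,Mc)\oplus B(0,r)$ is the union of a cylinder of length $|c-Mc|$ and radius $r$ with two hemispherical caps, whence
\[
\mathcal{H}^d\bigl(\ell(c,Mc)\oplus B(0,r)\bigr)=|c-Mc|\,\omega_{d-1}r^{d-1}+\omega_d r^d.
\]

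Next, fix $\d>0$ and take a covering $\{B_j\}_j$ of $\partial A$ by sets of diameter at most $\d$ that each meet $\partial A$. By Jung's theorem each $B_j$ lies in a ball $B(c_j,r_j)$ with $r_j\le \sqrt{d/(2d+2)}\,\diam B_j$. Picking $a_j\in B_j\cap\partial A$, the triangle inequality yields $|c_j-Mc_j|\le |c_j-a_j|+|a_j-Ma_j|+|Ma_j-Mc_j|\le 2r_j+w$. Writing $L$ for the union on the left-hand side of the lemma, the sausage inclusion and volume formula give
\[
\mathcal{H}^d(L)\;\le\;\sum_j\bigl[(w+2r_j)\omega_{d-1}r_j^{d-1}+\omega_d r_j^d\bigr].
\]
I would then split the right-hand side into a leading term proportional to $w$ and error terms that carry an extra factor $r_j\le \sqrt{d/(2d+2)}\,\d$ and hence will vanish as $\d\to +0$.

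To finish, substitute the Jung bound $r_j^{d-1}\le (d/(2d+2))^{(d-1)/2}(\diam B_j)^{d-1}$ into the leading term, take the infimum over coverings to convert $\sum_j(\diam B_j)^{d-1}$ into $\mathcal{H}^{d-1}_\d(\partial A)$ via its definition, and let $\d\to +0$. The main obstacle is checking that the constants collapse to exactly $(2d/(d+1))^{(d-1)/2}$: Jung contributes $(d/(2(d+1)))^{(d-1)/2}$, the sausage cross-section contributes $\omega_{d-1}$, and the factor $2^{d-1}$ implicit in the normalization of $\mathcal{H}^{d-1}$ combines with the Jung factor to produce $(2d/(d+1))^{(d-1)/2}$ exactly. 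Everything else is routine bookkeeping.
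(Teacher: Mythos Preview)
Your proof is correct and follows the same overall scheme as the paper's: cover $\partial A$ by small pieces, invoke Jung's theorem to enclose each piece in a ball, wrap the resulting bundle of segments in a body of computable volume, sum, and let $\delta\to+0$. The one point of difference is the choice of enclosing body. The paper takes the smallest enclosing balls $enc(B_j)$ and $enc(MB_j)$, passes to their convex hull, and then further encloses that in a straight cylinder whose cross-section is $enc(B_j)\,|\,t^{\perp}$ and whose height is roughly $w+4r_j$. You instead observe directly, using only that $M$ is an isometry, that $\bigcup\{\ell(b,Mb):b\in B(c,r)\}\subseteq \ell(c,Mc)\oplus B(0,r)$, obtaining the ``sausage'' in a single clean step. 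Your inclusion is slightly more elegant and uses the orthogonality of $M$ more transparently; the paper's cylinder argument is more hands-on but does not need the isometry observation. Either way the enclosing body has leading volume $w\,\omega_{d-1}r_j^{d-1}$ with error terms of order $r_j^{\,d}$, so both routes collapse to the same constant $(2d/(d+1))^{(d-1)/2}$ after the Jung bound and the $2^{d-1}$ from the Hausdorff normalization are combined.
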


\begin{proof}
The proof is similar to the one for translations. We abbreviate $L = \bigcup \{ \ell(a,Ma) : a \in \partial A\}$.
Let $\d>0$ and let $\{ B_j : j \in \N \}$ be a covering of $\partial A$ with $\diam{B_j} \le \d$ for all $j \in \N$. Let $w=\max_{a \in \partial A} |a-Ma|$. We again cover $L$ by a set of cylinders $\{ Z_j : j \in \N \}$, which are defined using the $B_j$s. Since the line segments in $L$ are not parallel, top and bottom of $Z_j$ have a volume that is larger than a ball of diameter $B_j$, and therefore we get a constant in the inequality, which is larger than one.

Let $enc(B_j)$ and $enc(MB_j)$ be the smallest enclosing balls of $B_j$ and $MB_j$. Clearly, both have the same radius. By Theorem \ref{thm:Jung}, their radius $r$ is at most $\sqrt{\frac d {2d+2}} \diam(B_j) $. Let $t$ be the vector from the center of $enc(B_j)$ to the center of $enc(MB_j)$. The length $|t| \le w + 2 \sqrt{\frac d {2d+2}} \diam(B_j)$. See Figure \ref{fig:RMZylinder}.

\begin{figure}[ht]
\begin{center}
\includegraphics[page=1,width=0.55\textwidth]{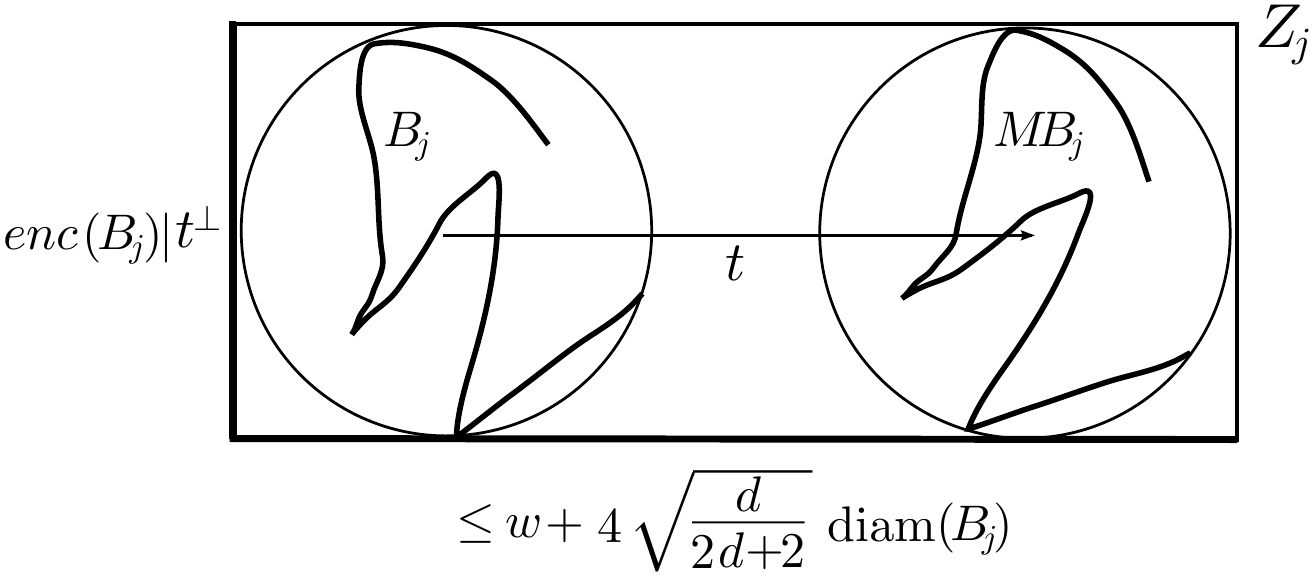}
\caption{The definition of the cylinder $Z_j$, which contains all line segments $\ell(b,Mb)$ for $b \in B_j$.}
\label{fig:RMZylinder}
\end{center}
\end{figure}

Already the convex hull of $enc(B_j)$ and $enc(MB_j)$ contains the union of all line segments $\ell(b,Mb)$ for $b \in B_j$. We again enlarge the set by considering the cylinder $Z_j$ that has copies of the $(d-1)$-dimensional ball $enc(B_j)|t^\bot$ as top and bottom. Top and bottom are touching $enc(B_j)$ and $enc(MB_j)$, respectively, so that the cylinder contains the convex hull of $enc(B_j)$ and $enc(MB_j)$.  The volume of top and bottom equals $\mathcal{H}^{d-1}(enc(B_j)|t^\bot) = \omega_{d-1} r^{d-1} \le \omega_{d-1} (\frac d {2d+2})^{\frac {d-1} 2} \diam(B_j)^{d-1}$. The distance of top and bottom is at most $|t| \le w + 4 \sqrt{\frac d {2d+2}} \diam(B_j)$. By Theorem 3.2.23 in \cite{federer}, the volume of $Z_j$ can be computed as the product of the area of the bottom and the height of the cylinder.

We have
\begin{eqnarray*}
\mathcal{H}^d (L) &\le & \sum_{j \in \N} \mathcal{H}^d(Z_j)\\ 
& \le &   \sum_{j \in \N} \omega_{d-1} \Bigl(\frac d {2d+2}\Bigr)^{\frac {d-1} 2} \diam(B_j)^{d-1} \Bigl(w + 4 \sqrt{\frac d {2d+2}} \diam(B_j) \Bigr)\\ 
& \le & \Bigl(w + 4 \sqrt{\frac d {2d+2}} \d \Bigr) \; \sum_{j \in \N} \omega_{d-1} \Bigl(\frac d {2d+2}\Bigr)^{\frac {d-1} 2} \diam(B_j)^{d-1}\\ 
& \le & 2^{\frac {d-1} 2} \Bigl(\frac d {d+1}\Bigr)^{\frac {d-1} 2} \; \Bigl(w + 4 \sqrt{\frac d {2d+2}} \d \Bigr) \; \sum_{j \in \N} \omega_{d-1} 2^{-(d-1)} \diam(B_j)^{d-1}
\end{eqnarray*}
This implies $\mathcal{H}^d (L) \le (\frac {2d} {d+1})^{\frac {d-1} 2} \; (w + 4 \sqrt{\frac d {2d+2}} \d ) \; \mathcal{H}_\d^{d-1}(\partial A)$ for all $\d >0$ and therefore $\mathcal{H}^d (L) \le (\frac {2d} {d+1})^{\frac {d-1} 2} \; w \; \mathcal{H}^{d-1}(\partial A)$.
\end{proof}

If we could assume in the proof that the covering $\{B_j\}_{j \ge 0}$ contains only balls, then the constant $\left(\frac{2d}{d+1}\right)^{\frac {d-1} 2}$ in Theorem \ref{thm:volLineSegsRot} could be replaced by one. We can do this for sets for which the Hausdorff and the spherical measure coincide. Therefore, we get

\begin{cor}\label{cor:rot1}
Let $A \subset \R^d$ be a bounded set such that $\partial A$ is $(\mathcal{H}^{d-1},d-1)$-rectifiable.
Let $M \in \R^{d \times d}$ be a rotation matrix and let $w = \max_{a \in \partial A} |a - Ma|$.
Then, $$\mathcal{H}^d \left( \bigcup \{ \ell(a,Ma) : a \in \partial A \} \right) \le w \, \mathcal{H}^{d-1}(\partial A).$$
\end{cor}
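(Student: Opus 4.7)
The plan is to reopen the proof of Lemma \ref{thm:volLineSegsRot} and replace the arbitrary covering $\{B_j\}$ of $\partial A$ by a covering consisting of closed balls. The constant $\bigl(\tfrac{2d}{d+1}\bigr)^{(d-1)/2}$ in that lemma comes from applying Jung's theorem to bound the radius of $enc(B_j)$ by $\sqrt{d/(2d+2)}\,\diam(B_j)$ in place of the trivial bound $\diam(B_j)/2$. When $B_j$ is itself a ball, we have $enc(B_j)=B_j$, and so Jung's bound can be replaced by equality with the smaller factor $1/2$, which is exactly what makes the dimensional constant disappear.

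Concretely, I would fix $\d>0$, take any covering of $\partial A$ by balls with $\diam B_j\le\d$ (and, without loss of generality, with $B_j\cap\partial A\ne\emptyset$), and build the cylinder $Z_j$ as in Lemma \ref{thm:volLineSegsRot}. Writing $d_j=\diam(B_j)$, the cylinder now has radius $r=d_j/2$. The vector $t$ between the centers of $B_j$ and $MB_j$ satisfies $|t|\le w+2r=w+d_j$ by the triangle inequality through any point $b\in B_j\cap\partial A$, since $|c_j-b|\le r$, $|b-Mb|\le w$, and $|Mb-Mc_j|\le r$. Hence the height of $Z_j$ is at most $|t|+2r\le w+2d_j\le w+2\d$, and Theorem 3.2.23 of \cite{federer} gives
$$\mathcal{H}^d(Z_j)\le \omega_{d-1}(d_j/2)^{d-1}(w+2\d).$$
Summing over $j$ yields
$$\mathcal{H}^d(L)\le (w+2\d)\sum_{j\in\N}\omega_{d-1}2^{-(d-1)}d_j^{d-1}.$$

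Taking the infimum over all ball coverings of $\partial A$ with $\diam B_j\le\d$ converts the right-hand sum into the approximating spherical measure $\mathcal{S}^{d-1}_\d(\partial A)$, whence $\mathcal{H}^d(L)\le (w+2\d)\,\mathcal{S}^{d-1}_\d(\partial A)$. Letting $\d\to +0$ gives $\mathcal{H}^d(L)\le w\,\mathcal{S}^{d-1}(\partial A)$. At this point the rectifiability hypothesis enters: by Theorem 3.2.26 of \cite{federer}, the spherical and Hausdorff measures of equal dimension agree on every $(\mathcal{H}^{d-1},d-1)$-rectifiable set, so $\mathcal{S}^{d-1}(\partial A)=\mathcal{H}^{d-1}(\partial A)$ and the corollary follows. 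The only conceptual step is the interchange from arbitrary coverings to ball coverings, and this is legitimate precisely because the definition of $\mathcal{S}^{d-1}_\d$ restricts to ball coverings and rectifiability closes the gap between $\mathcal{S}^{d-1}$ and $\mathcal{H}^{d-1}$; no new geometric content beyond Lemma \ref{thm:volLineSegsRot} is required.
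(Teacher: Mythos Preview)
Your proposal is correct and follows exactly the approach sketched in the paper: rerun the cylinder estimate from Lemma~\ref{thm:volLineSegsRot} with ball coverings so that $enc(B_j)=B_j$ has radius $d_j/2$, obtain $\mathcal{H}^d(L)\le w\,\mathcal{S}^{d-1}(\partial A)$, and then invoke \cite[Theorem 3.2.26]{federer} to replace $\mathcal{S}^{d-1}$ by $\mathcal{H}^{d-1}$ under the rectifiability hypothesis. The paper merely states this in one sentence before the corollary; you have spelled out the details faithfully.
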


For example, if $A \subset \R^d$ is the finite union of simplices, then $\partial A$ is $(\mathcal{H}^{d-1},d-1)$-rectifiable. Finite unions of simplices are a common representation of shapes.

\section{Application to shape matching}\label{sec:shapeMatching}

For two shapes $A,B \subset \R^d$, let $F$ be the function that maps a rigid motion $r$ to the volume of overlap of $r(A)$ and $B$. Maximizing this function over all rigid motions is a shape matching problem, as described in the introduction. We are interested in studying the smoothness of the function $F$. In particular, we want to bound $|F(r) -F(s)|$ if the rigid motions $r$ and $s$ are close, meaning that they do not move points from $A$ too far apart. We first prove an easy proposition, which we then use, together with the results of the previous section, to establish the bound that we are interested in.

\begin{prop}\label{prop:symDiff}
For a measure space $(\mathcal{M},\mu)$ and $\mu$-measurable sets $D,E,G$ in $\mathcal{M}$ such that $\mu(D)=\mu(G)$ is $|\mu(D \cap E) - \mu (G \cap E)| \le \mu (D \setminus G)$.
\end{prop}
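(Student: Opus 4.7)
The plan is to reduce the difference $\mu(D\cap E)-\mu(G\cap E)$ to quantities involving only the symmetric difference of $D$ and $G$, and then use the equal-measure hypothesis to control both sides.

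First I would decompose $D$ and $G$ according to their intersection: write $D=(D\cap G)\cup (D\setminus G)$ and $G=(D\cap G)\cup (G\setminus D)$, both as disjoint unions. Intersecting each with $E$ and using additivity of $\mu$ yields
\begin{equation*}
\mu(D\cap E)=\mu(D\cap G\cap E)+\mu((D\setminus G)\cap E),
\end{equation*}
and the analogous identity for $G\cap E$. Subtracting, the common term $\mu(D\cap G\cap E)$ cancels, giving
\begin{equation*}
\mu(D\cap E)-\mu(G\cap E)=\mu((D\setminus G)\cap E)-\mu((G\setminus D)\cap E).
\end{equation*}

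Next I would use the hypothesis $\mu(D)=\mu(G)$. Decomposing $D$ and $G$ again as above and using additivity, this hypothesis is equivalent to $\mu(D\setminus G)=\mu(G\setminus D)$. Since both $\mu((D\setminus G)\cap E)$ and $\mu((G\setminus D)\cap E)$ are nonnegative and bounded above by $\mu(D\setminus G)$ and $\mu(G\setminus D)$ respectively, the elementary inequality $|a-b|\le\max(a,b)$ for $a,b\ge 0$ gives
\begin{equation*}
|\mu(D\cap E)-\mu(G\cap E)|\le\max\bigl(\mu((D\setminus G)\cap E),\mu((G\setminus D)\cap E)\bigr)\le\mu(D\setminus G),
\end{equation*}
which is the desired bound.

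There is no real obstacle here: the proposition is essentially a bookkeeping identity combined with a trivial inequality for nonnegative numbers. The only point requiring a little care is ensuring the measurability of the intermediate sets $D\cap G$, $D\setminus G$, $D\setminus G\cap E$, etc., which is automatic from the measurability of $D$, $E$, $G$ and the closure of the $\sigma$-algebra under countable unions, intersections, and complements.
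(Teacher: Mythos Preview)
Your proof is correct and follows essentially the same route as the paper: both decompose $D\cap E$ and $G\cap E$ via $D\cap G$ to obtain $\mu(D\cap E)-\mu(G\cap E)=\mu((D\setminus G)\cap E)-\mu((G\setminus D)\cap E)$, then apply $|a-b|\le\max(a,b)$ for nonnegative $a,b$, monotonicity, and the equal-measure hypothesis $\mu(D\setminus G)=\mu(G\setminus D)$. The only cosmetic difference is that the paper records the final bound as $\tfrac12\,\mu(D\bigtriangleup G)$, which equals your $\mu(D\setminus G)$ under the hypothesis.
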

\begin{proof}
\begin{align*}|\mu(D \cap E) - \mu(G \cap E)| &= |\mu((D \setminus G) \cap E) - \mu((G \setminus D) \cap E)| \\ &\le \max \{ \mu((D\setminus G) \cap E), \mu((G \setminus D) \cap E) \} \\ &\le \max \{ \mu(D\setminus G), \mu(G \setminus D) \} \\ &= \frac 1 2 \, \mu(D \bigtriangleup G)
\end{align*}
\end{proof}

\begin{proof}[Proof of Corollary \ref{cor:shapeMatching}]
Let $r=(M,p)$ and $s=(N,q)$ be rigid motions, and let $w = \max_{a \in \partial A} |Ma - Na|$. Using Proposition \ref{prop:symDiff}, we get
$$| \mathcal{H}^d(r(A) \cap B) - \mathcal{H}^d(s(A) \cap B) | \le \frac 1 2 \, \mathcal{H}^d \bigl(r(A) \bigtriangleup s(A) \bigr) = \frac 1 2 \, \mathcal{H}^d \bigl((s^{-1}\circ r)(A) \bigtriangleup A \bigr).$$
The map $s^{-1}\circ r$ is a rigid motion with rotation matrix $N^{-1}M$ and translation vector $N^{-1}(p-q)$. Therefore, $| \mathcal{H}^d(r(A) \cap B) - \mathcal{H}^d(s(A) \cap B) | \le \frac 1 2 \, \bigl(w + |p-q| \bigr) \, \mathcal{H}^{d-1}(\partial A)$ by Corollary \ref{cor:volSymDiffRM}.
\end{proof}

\section*{Acknowledgements}
I am grateful to Richard Gardner and Gabriele Bianchi for pointing out relations to the covariogram and other problems. With Felix Breuer and Arne M\"uller, I had fruitful discussions on the topic. I thank Helmut Alt and Matthias Henze for reading a preliminary version of the manuscript carefully.


\bibliographystyle{plain}
\bibliography{daria}

\end{document}